\documentclass[a4paper,fleqn]{cas-dc}

\usepackage[numbers]{natbib}
\usepackage{amsmath,amssymb}
\usepackage[ruled,linesnumbered]{algorithm2e}

\def\tsc#1{\csdef{#1}{\textsc{\lowercase{#1}}\xspace}}
\tsc{WGM}
\tsc{QE}
\newtheorem{theorem}{Theorem}
\newtheorem{lemma}[theorem]{Lemma}
\newtheorem{corollary}[theorem]{Corollary}
\newtheorem{proposition}[theorem]{Proposition}
\newdefinition{remark}{Remark}
\newtheorem{assumption}{Assumption}
\newproof{proof}{Proof}
\newcommand{\bR}{\mathbb{R}}

\newcommand{\cI}{\mathcal{I}}

\newcommand{\cQ}{\mathcal{Q}}
\newcommand{\cN}{\mathcal{N}}

\newcommand{\bx}{\boldsymbol{x}}
\newcommand{\by}{\boldsymbol{y}}
\newcommand{\bz}{\boldsymbol{z}}

\newcommand{\bu}{\boldsymbol{u}}

\newcommand{\bphi}{\boldsymbol{\phi}}

\newcommand{\bg}{\boldsymbol{g}}

\newcommand{\bq}{\boldsymbol{q}}
\newcommand{\bv}{\boldsymbol{v}}

\newcommand{\be}{\begin{equation}}
\newcommand{\ee}{\end{equation}}
\newcommand{\iprod}[2]{\langle #1, #2 \rangle}

\newcommand{\dist}{\mathrm{dist}}

\newcommand{\gph}{\mathrm{gph}}

\newcommand{\Range}{\mathrm{Range}}
\newcommand{\Null}{\mathrm{Null}}
\DeclareMathOperator*{\argmin}{arg\,min}

\begin{document}
\let\WriteBookmarks\relax
\def\floatpagepagefraction{1}
\def\textpagefraction{.001}

% Short title
\shorttitle{Linear convergence analysis of Proximal-Tracking}    

% Short author
\shortauthors{T. Yuan and X. Xiao}  

% Main title of the paper
\title [mode = title]{Linear convergence rate analysis of Proximal-Tracking for nonsmooth distributed optimization}  

% Title footnote mark
% eg: \tnotemark[1]
%\tnotemark[1]{This} 

% Title footnote 1.
% eg: \tnotetext[1]{Title footnote text}
%\tnotetext[1]{This} 

% First author
%
% Options: Use if required
% eg: \author[1,3]{Author Name}[type=editor,
%       style=chinese,
%       auid=000,
%       bioid=1,
%       prefix=Sir,
%       orcid=0000-0000-0000-0000,
%       facebook=<facebook id>,
%       twitter=<twitter id>,
%       linkedin=<linkedin id>,
%       gplus=<gplus id>]

\author[1]{Tianyu Yuan}%[<options>]

% Footnote of the first author
%\fnmark[1]

% Email id of the first author
\ead{y22301078@mail.dlut.edu.cn}

% URL of the first author
%\ead[url]{}

% Credit authorship
% eg: \credit{Conceptualization of this study, Methodology, Software}
\credit{Writing - Original draft preparation, Formal analysis, Methodology}

% Address/affiliation
\affiliation[1]{organization={School of Mathematical Sciences, Dalian University of Technology},
            %addressline={}, 
            city={Dalian},
%          citysep={}, % Uncomment if no comma needed between city and postcode
            %postcode={}, 
            %state={},
            country={China}}

\author[1]{Xiantao Xiao}\cormark[1]%[]

% Footnote of the second author
%\fnmark[2]

% Email id of the second author
\ead{xtxiao@dlut.edu.cn}

% URL of the second author
%\ead[url]{}

% Credit authorship
\credit{Writing - review \& editing, Supervision, Methodology, Funding acquisition, Conceptualization}

% Address/affiliation
% \affiliation[2]{organization={},
%             addressline={}, 
%             city={},
% %          citysep={}, % Uncomment if no comma needed between city and postcode
%             postcode={}, 
%             state={},
%             country={}}

% Corresponding author text
\cortext[cor1]{Corresponding author}

% Footnote text
%\fntext[1]{}

% For a title note without a number/mark
%\nonumnote{}

% Here goes the abstract
\begin{abstract}
Proximal-Tracking is a novel algorithm proposed in \cite{FP2022} for nonsmooth distributed consensus optimization with local set constraints. The global convergence and numerical behavior of Proximal-Tracking have been well studied in \cite{FP2022}. As a complement, in this paper we provide a theoretical analysis regarding its linear convergence rate under additional strong convexity and calmness assumptions. In particular, this analysis does not require assumptions such as differentiability or smoothness, thus preserving the original advantages of the algorithm.
\end{abstract}

% Use if graphical abstract is present
%\begin{graphicalabstract}
%\includegraphics{}
%\end{graphicalabstract}

% % Research highlights
% \begin{highlights}
% \item Proximal-Tracking is reformulated as an equivalent EXTRA-type form.
% \item Linear convergence rate of Proximal-Tracking is established under strong convexity and calmness, without smoothness.
% \item The results extends to Augmented Lagrangian Tracking for distributed optimization with coupling constraints.
% \end{highlights}

%\nocite{*}

% Keywords
% Each keyword is seperated by \sep
\begin{keywords}
Distributed optimization \sep Proximal-Tracking \sep Convergence rate
\end{keywords}

\maketitle

%------------------------------------------------------------------------
%------------------------------------------------------------------------
%------------------------------------------------------------------------
\section{Introduction}\label{sec:intro}

Distributed optimization plays a central role in a wide range of applications, including large-scale learning and networked systems. In such settings, multiple agents cooperate to solve a global optimization problem using only local information and limited communication with their neighbors. A substantial literature has developed distributed algorithms for unconstrained problems, and these methods have gradually been extended to more general and challenging settings. For comprehensive surveys of recent developments, we refer interested readers to \cite{review2019} and \cite{review2022}.

In this paper, we focus primarily on distributed consensus optimization problems with simple set constraints. In such scenarios, each agent is associated with a local objective and subject to local set constraints, with the goal of minimizing the aggregated objective function while satisfying all set constraints and consensus conditions.
A significant body of research has been dedicated to the treatment of such problems. Initially, for cases with consistent local set constraints, Nedi\'{c} et al. \cite{5404774} proposed a distributed projected (sub)gradient algorithm with decreasing step size and proved its convergence on fixed fully connected graphs. 
%This approach was subsequently extended to time-varying directed graphs (e.g. \cite{LIN2016120}, \cite{WLRS2018}). 
To accelerate convergence, a distributed primal-dual algorithm with fixed-steps was proposed  in \cite{LEI2016110}  based on EXTRA \cite{EXTRA}. Building upon the classical alternating direction method of multipliers (ADMM),  an algorithm named D-ADMM was introduced in \cite{D-ADMM} to address this problem. 

Among the many algorithms, the Proximal-Tracking method proposed in \cite{FP2022} is particularly noteworthy. It cleverly combines the classic proximal minimization algorithm \cite[Section 5.1]{Bertsekas2015} with the gradient tracking framework \cite{DIGing} to solve this distributed optimization problem. The main advantage of this algorithm is that it is a fixed-step method specifically designed for problems with explicit local set constraints, requiring only convexity of the local cost function and not assuming differentiability, smoothness, or Lipschitz continuity. Moreover, Proximal-Tracking can also be applied to solve distributed optimization with equality and inequality coupling constraints, as demonstrated in \cite{FP2023}.

Despite these advantages, the theoretical understanding of Proximal-Tracking remains incomplete. In particular, no precise analysis of the convergence rate has been provided. This limitation hinders our deeper understanding of the algorithm and necessitates a more detailed theoretical analysis. 

In this paper, we present an equivalent formulation of Proximal-Tracking, linking it to the framework of EXTRA \cite{EXTRA}, thereby broadening our perspective on the algorithm. Based on this formulation, we establish the linear convergence rate of Proximal-Tracking. This theoretical result requires the additional assumptions of strong convexity and calmness, while maintaining the key advantage of the algorithm of not requiring assumptions such as differentiability or smoothness of the objective function.

The remainder of this paper is organized as follows. In Section \ref{sec:prel}, we start by introducing the problem formulation, then present the Proximal-Tracking algorithm and its equivalent formulation. In Section \ref{sec:rate}, we study the linear convergence of Proximal-Tracking under the assumptions of strong convexity and calmness. Section \ref{sec:alt} provides a short discussion on the linear convergence of Augmented Lagrangian Tracking.

\textbf{Notation.} Let $\mathbb{R}^n$ denote the $n$-dimensional Euclidean space. For a symmetric matrix $A$, let $\rho(A)$ denote its spectral radius. The symbol $\otimes$ represents the Kronecker product. For a set $Z$, $\cI_{Z}(z)$ denotes the indicator function of the set $Z$, which is defined as $\cI_{Z}(z) = 0$ if $z \in Z$ and $\cI_{Z}(z) = +\infty$ otherwise. Let $\dist(z,Z):=\min_{x\in Z}\|x-z\|$ be the distance from a point  $z$ to set $Z$.   For a convex function $f$, $\partial f(z)$ denotes the subdifferential of $f$ at $z$. The vector in $\bR^N$ containing all ones is denoted by $\mathbb{1}_{N}$. Let  $I$ be the identity matrix of size  $Nn$. For a set-valued mapping $F:\bR^n\rightrightarrows\bR^m$,  the graph of $F$ is $\gph(F):=\{(x,y)|y\in F(x)\}$ and its inverse mapping is defined by $F^{-1}(y):=\{x\in\bR^n|y\in F(x)\}$.

%------------------------------------------------------------------------
%------------------------------------------------------------------------
%------------------------------------------------------------------------
\section{Preliminaries} \label{sec:prel}
In this section, let us recall the algorithmic framework of Proximal-Tracking and the existing convergence results.

\subsection{Problem formulation and assumptions}
Consider a multi-agent network consisting of $N$ agents, which cooperatively solve the following problem:
\be \label{eq:IPMO}
    \begin{aligned}
        \min_{z} &\; f(z):=\sum_{i=1}^{N} f_i(z) ,\\
        \text{s.t. }& z \in \bigcap_{i=1}^{N} Z_i.
    \end{aligned}
\ee
where each $f_i:\bR^n\rightarrow\bR$ is a local objective function and can only be known by the agent $i$. Meanwhile, the consensus decision variable $z$ must satisfy the local constraint set $Z_i \subseteq \bR^n$ of each agent $i$.

The following assumption is made to ensure that the above distributed problem is well defined. 
\begin{assumption}\label{assu:monotone}
For any $i=1,\ldots,N$, the local objective function $f_i$ is convex and lower semicontinuous,
and the local set $Z_i$ is convex and closed.  Moreover,  
there exists at least one optimal solution $z^*$ to Problem \eqref{eq:IPMO}.
\end{assumption}

For each agent $i$, define locally extended real-valued function $\phi_i:\bR^n\rightarrow(-\infty,+\infty]$ by
 $\phi_i(z):= f_i(z)+ \cI_{Z_i}(z)$. 
 Problem \eqref{eq:IPMO} can be equivalently reformulated as: 
\be\label{eq:IPMO_eqv}
\begin{aligned}
    \min_z \phi(z):= \sum_{i=1}^{N} \phi_i (z).
\end{aligned}
\ee
Assumption \ref{assu:monotone} and \cite[Theorem 9.3]{ConvexAnalysis} ensure that all $\phi_i$ and $\phi$ are proper, lower semicontinuous, and convex functions.

In order to solve \eqref{eq:IPMO} cooperatively, agents must exchange information repeatedly through a communication network. In this process, communication between agents is modeled as a graph $ \mathcal{G} = (\mathcal{V}, \mathcal{E})$, where the set of vertices $\mathcal{V} = \{1 \ldots N \} $ denotes the agents, and the set of edges $\mathcal{E} \subseteq \mathcal{V} \times \mathcal{V} $ denotes the communication links between the agents. We denote the set of neighbors of agent $i$ by $\mathcal{N}_{i} = \{ j \in \mathcal{V} : (i,j) \in \mathcal{E} \}$. Define the adjacency matrix $\mathcal{W} = (w_{ij}) \in \mathbb{R}^{N \times N} $ with $w_{ij} > 0$ if $(i,j) \in \mathcal{E}$ and $w_{ij} = 0$ otherwise.
The following assumption is popular in the study of distributed optimization.

\begin{assumption}\label{assu:connectivity}
    The graph $\mathcal{G}$ is undirected and connected. The matrix $\mathcal{W}$ is symmetric, doubly stochastic, and positive definite. 
\end{assumption}

\begin{remark}
The assumption that $\mathcal{W}$ is positive definite can be satisfied by constructing $\mathcal{W}=\frac{3I_N+\mathcal{W}'}{4}$ from any adjacency matrix $\mathcal{W}'$ that is symmetric and doubly stochastic. Similar approach is also used in \cite{EXTRA,PG-EXTRA,FP2022}.
\end{remark}

Let
\[
\sigma:=1-\max\{\lambda:\lambda\in\mbox{eigen}(\mathcal{W}),\lambda<1\}
\]
be the spectral gap of $\mathcal{W}$. That is, $\sigma=1-\lambda_2(\mathcal{W})$ when the eigenvalues are arranged in nonincreasing order. 
Define $W := \mathcal{W} \otimes I_{n}$ and
\[
W_{\infty} := \left(\frac{1}{N} \mathbb{1}_{N} \mathbb{1}_{N}^\top \right) \otimes I_{n},\quad \tilde{W} := W - W_{\infty}.
\]
Under Assumption \ref{assu:connectivity}, for all $\theta \in \bR^{n} $ and $\beta_{i} \in \bR^{n}, i = 1 \cdots N$,  we have from  \cite[Lemma 1]{FP2022} that
\begin{subequations} \label{eq:PropW}
    \begin{align}
        & W_{\infty} \boldsymbol{\beta} = \boldsymbol{\bar{\beta}}, \label{eq:PropW1} \\
        & W_{\infty} W = W W_{\infty} = W_{\infty}, \label{eq:PropW2} \\
        & W ( \mathbb{1}_{N} \otimes \theta ) = \mathbb{1}_{N} \otimes \theta, \label{eq:PropW3} \\
        & W_{\infty} ( \mathbb{1}_{N} \otimes \theta ) = \mathbb{1}_{N} \otimes \theta, \label{eq:PropW4} \\
        & W_{\infty} (\boldsymbol{\beta} - \boldsymbol{\bar{\beta}}) = 0, \label{eq:PropW5}
    \end{align}
where
 $ \boldsymbol{\beta} := {[{\beta_{1}}^\top \cdots {\beta_{N}}^\top ]}^\top $ and $\boldsymbol{\bar{\beta}} := \mathbb{1}_{N} \otimes ( \frac{1}{N} \sum_{i=1}^{N} \beta_{i})$. Moreover, it yields
    \begin{align}
        & \rho (\tilde{W}) < 1, \label{eq:PropW6} \\
        & (I - \tilde{W})^{-1} (I-W) = I - W_{\infty}. \label{eq:PropW7}     
    \end{align}
\end{subequations}
It is also easy to verify that
\[
W\succ 0,\quad 2I-W\succ 0,
\]
the eigenvalues of $W$ lie in $(0,1]$ and $\lambda_{\max}(W)=1$. For every $\bv\in\Range(I-W)$, $\|(I-W)\bv\|\geq\sigma \|\bv\|$.

\begin{lemma}\label{lem:span}
Under Assumptions \ref{assu:monotone} and \ref{assu:connectivity}, it holds that
    \[{\Range}(I-W) = {\Null}(W_{\infty}).\]
\end{lemma}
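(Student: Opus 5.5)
We prove the two inclusions $\Range(I-W)\subseteq\Null(W_\infty)$ and $\Null(W_\infty)\subseteq\Range(I-W)$ separately, using only the properties of $W$ and $W_\infty$ collected in \eqref{eq:PropW}; Assumption~\ref{assu:monotone} plays no role.

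\textbf{First inclusion.} Take $\bv=(I-W)\bu$. By \eqref{eq:PropW2},
\[
W_\infty\bv=W_\infty\bu-W_\infty W\bu=W_\infty\bu-W_\infty\bu=0,
\]
so $\bv\in\Null(W_\infty)$.

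\textbf{Reverse inclusion.} Take $\bv$ with $W_\infty\bv=0$; we want to exhibit a preimage under $I-W$. The natural candidate is $\bu=(I-\tilde W)^{-1}\bv$, which is well defined because $\rho(\tilde W)<1$ by \eqref{eq:PropW6}. Since $I-\tilde W$ and $I-W$ are both polynomials in $W$ (recall $W_\infty$ is also a polynomial in $W$, or simply that they commute by \eqref{eq:PropW2}), they commute. Together with \eqref{eq:PropW7} this gives
\[
(I-W)\bu=(I-\tilde W)^{-1}(I-W)\bv=(I-W_\infty)\bv=\bv .
\]
Hence $\bv\in\Range(I-W)$.

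The only point needing care is this commutation. I would verify it directly: $\tilde W=W-W_\infty$ commutes with $W$ by \eqref{eq:PropW2}, so $(I-\tilde W)^{-1}$ commutes with $I-W$.

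\textbf{Alternative argument.} One can also argue by dimension counting. Both subspaces have dimension $(N-1)n$: connectivity gives that the eigenvalue $1$ of $\mathcal W$ is simple, so $\Null(I-W)=\{\mathbb 1_N\otimes\theta\}$, while $W_\infty$ is the orthogonal projector onto that same space. Combined with the first inclusion, the two subspaces must then coincide. I would include this only as a remark.
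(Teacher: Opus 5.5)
Your proof is correct, but your main argument for the reverse inclusion takes a different route from the paper. The paper gets $\Range(I-W)\subseteq\Null(W_\infty)$ from \eqref{eq:PropW2}, as you do, and then closes by dimension counting. Since $W_\infty$ has rank $n$, $\dim\Null(W_\infty)=Nn-n$. Since $W_\infty$ is idempotent, $\dim\Range(I-W_\infty)=Nn-n$. Finally, \eqref{eq:PropW7} is invoked to identify $\Range(I-W_\infty)$ with $\Range(I-W)$. Strictly, \eqref{eq:PropW7} only gives $\mathrm{rank}(I-W)=\mathrm{rank}(I-W_\infty)$ via invertibility of $I-\tilde W$; full equality of the ranges would need the commutation you check, but the rank equality is all the count requires.

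You instead use \eqref{eq:PropW7} constructively. You verify $W\tilde W=W^2-W_\infty=\tilde W W$, so $(I-\tilde W)^{-1}$ commutes with $I-W$, and then exhibit the explicit preimage $(I-\tilde W)^{-1}\bv$ of any $\bv\in\Null(W_\infty)$. This avoids rank and idempotency arguments entirely, at the cost of that commutation check. It also shows that $(I-\tilde W)^{-1}$ inverts $I-W$ on $\Null(W_\infty)$; in fact it agrees there with the pseudoinverse $P^\dagger$ that appears later in Lemma~\ref{lem:metric}.

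Your alternative remark is closer to the paper in spirit, since it also counts dimensions. However, it obtains $\dim\Range(I-W)=Nn-n$ from simplicity of the eigenvalue $1$ of $\mathcal W$ (connectivity) rather than from \eqref{eq:PropW7}; either works. You are also right that Assumption~\ref{assu:monotone} plays no role.
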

\begin{proof}
It is obvious that ${\Range}(I-W)\subseteq \Null(W_{\infty})$ from (\ref{eq:PropW2}). It remains to show that the two subspaces have the same dimension.

 Recall that $W_{\infty}=\big(\frac{1}{N}\mathbb{1}_{N}\mathbb{1}_{N}^\top\big)\otimes I_{n}$, we have that $W_{\infty}$ has rank $n$ and hence the dimension of ${\Null}(W_{\infty})$ is $Nn-n$. Moreover, it is not difficult to derive that  $W_{\infty}$ is idempotent and hence the dimension of ${\Range}(I-W_{\infty})$ is also $Nn-n$.
 Finally, from (\ref{eq:PropW7}), we have that ${\Range}(I-W_{\infty})={\Range}(I-W)$ and thus the proof is completed. \hfill $\square$
\end{proof}

%%---------------------------------------------------
\subsection{Proximal-Tracking}
Proximal-Tracking, proposed in \cite{FP2022}, can be roughly seen as a combination of the classic proximal minimization algorithm  and the popular gradient tracking framework. 

For minimizing a function $f(z)$, the proximal minimization algorithm iteratively updates the decision variable $z$ by solving a sequence of subproblems of the form
\[
z^{k+1}=\argmin\limits_{z\in\bR^n}\left\{f(z)+\frac{1}{2c_k}\|z-z^k\|^2\right\},
\]
where $c_k>0$ is a step size parameter. Obviously, if $f$ is differentiable, the above update can be rewritten as
\be\label{eq:PPA}
z^{k+1}=z^k-c_k\nabla f(z^{k+1}),
\ee
which can be interpreted as an implicit gradient method.

The gradient-tracking technique is a popular approach to improve the convergence behavior of decentralized optimization algorithms \cite{DIGing, QL2018, Tracking-ADMM, Push-Pull, PN2021}.
Consider Problem (\ref{eq:IPMO_eqv}) and let $q_i^k\in\partial \phi_i(z_i^k)$, the (sub)gradient-tracking method can be stated as follows,
\be\label{eq:DIGing}
z_i^{k+1}=\hat{z}_i^k-c g_i^k
\ee
with
\[
\hat{z}_i^k=\sum_{j\in\cN_i}w_{ij}z_j^k,\quad g_i^k=\sum_{j\in\cN_i}w_{ij}g_j^{k-1}+q_i^k-q_i^{k-1},
\]
where  $c>0$ is the stepsize. Clearly, the method (\ref{eq:DIGing}) can be viewed as a kind of (sub)gradient step for agent $i$, in which $\hat{z}_i^k$ is the weighted average of the iterates of its neighboring agents and $g_i^k$ tracks the gradient information according to a dynamic average consensus scheme.

Inspired by  (\ref{eq:PPA}), by simply replacing the (sub)gradient term in (\ref{eq:DIGing}) with the implicit gradient term, we derive the Proximal-Tracking algorithm  as follows
\be\label{eq:PT}
z_i^{k+1}=\hat{z}_i^k-c g_i^{k+1}
\ee
with
\[
\hat{z}_i^k=\sum_{j\in\cN_i}w_{ij}z_j^k,\quad g_i^{k+1}=\sum_{j\in\cN_i}w_{ij}g_j^{k}+q_i^{k+1}-q_i^k.
\]
Note that (\ref{eq:PT}) is not directly implementable since $q_i^{k+1}\in\partial \phi_i(z_i^{k+1})$ is not available at the current iteration. However, it is shown in \cite{FP2022} that the Proximal-Tracking algorithm can be reformulated to a practical implementable form, which is described in Algorithm \ref{alg:PT}.
\begin{algorithm}
    \caption{Proximal-Tracking run by agent $i$}
    \label{alg:PT}
    \textbf{Initialization: } initial variables
        $z_{i}^{0}=g_{i}^{0}= q_{i}^{0}=0 \in \bR^{n} $ and fixed constant $c>0$. \\
    \textbf{For $k=0,1,\ldots$ do: }\\
    Compute the weighted averages,
        \[
        \xi_{i}^{k} = \sum_{j \in \mathcal{N}_i}  w_{ij}z_{j}^{k}, \quad
        \iota_{i}^{k}= \sum_{j \in \mathcal{N}_i}  w_{ij}g_{j}^{k}. 
        \]\\
    Updating the decision variables,
    \[
    z_{i}^{k+1} = \argmin_{z_i \in Z_i} \left\{ f_i(z_i) + (\iota_i^k-q_i^k)^Tz_i + \frac{1}{2c} \| z_i-\xi_i^k \|^2 \right\}.
    \]\\
    Update the subgradient,
        \[
        q_{i}^{k+1} = \frac{1}{c} (\xi_{i}^{k} - z_{i}^{k+1} ) + q_{i}^{k} -\iota_{i}^{k}.\] \\
    Update the tracker,
        \[
        g_{i}^{k+1} = \iota_{i}^{k}+ q_{i}^{k+1} - q_{i}^{k}.\] \\
        \textbf{end}
\end{algorithm}

%%--------------------------------------------------------------------
\subsection{Compact form of Proximal-Tracking}
In order to make the following analysis convenient, let us reformulate Algorithm \ref{alg:PT} to a compact form of the steps collectively performed by all agents running in a parallel manner. To this end, let us first denote with bold symbols the network-wide vectors obtained by stacking the corresponding quantities of all agents as
\[
    \begin{aligned}
        &\bz^{k} := {[ (z_1^k)^\top \cdots (z_N^k)^\top]}^\top,\  
          \bg^{k} := {[(g_1^k)^\top \cdots (g_N^k)^\top ]}^\top,\\
          & \bq^k := [(q_1^k)^\top \cdots (q_N^k)^\top ]^\top.
    \end{aligned}
\]
For any $\bz=[ z_1^\top \cdots z_N^\top]^\top$ with $z_i \in\bR^n$, 
define the following mapping 
\[
\bphi(\bz):= \sum_{i=1}^{N} \phi_i(z_i). 
\]
Let us also denote 
\[
\partial \bphi(\bz) := \{[(q_1)^\top \cdots (q_N)^\top ]^\top|q_i\in\partial \phi_i(z_i), i=1,\ldots,N\} .
\] 
The following lemma is easily derived from the above definitions.
\begin{lemma}\label{lem:solution}
  Under Assumptions \ref{assu:monotone} and \ref{assu:connectivity}, for any $\bz^* := \mathbb{1}_{N} \otimes z^*$, if there exists $\bq^*$ such that $\bq^*\in\partial \bphi(\bz^*)$ and  $W_{\infty}\bq^*=0$, then $z^*$ is an optimal solution to Problem \eqref{eq:IPMO}. 
\end{lemma}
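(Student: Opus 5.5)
The plan is to reduce the claim to the first-order optimality condition for Problem \eqref{eq:IPMO_eqv} and then show that the hypotheses on $\bq^*$ give exactly that condition. Since each $\phi_i$ is proper, convex and lower semicontinuous (Assumption \ref{assu:monotone}), $z^*$ minimizes $\phi=\sum_i\phi_i$ whenever $0\in\partial\phi(z^*)$. We do not need the reverse sum rule, which would require a constraint qualification. We only need the elementary inclusion $\sum_i\partial\phi_i(z^*)\subseteq\partial\phi(z^*)$, which holds for any convex functions.

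\textbf{Step 1: unpack the hypotheses.} Write $\bq^*=[(q_1^*)^\top\cdots(q_N^*)^\top]^\top$. Because $\bz^*=\mathbb{1}_N\otimes z^*$ has every block equal to $z^*$, the condition $\bq^*\in\partial\bphi(\bz^*)$ means $q_i^*\in\partial\phi_i(z^*)$ for each $i$. In particular, each $\partial\phi_i(z^*)$ is nonempty, so $z^*\in\mathrm{dom}\,\phi_i\subseteq Z_i$.

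By \eqref{eq:PropW1}, the condition $W_\infty\bq^*=0$ reads $\mathbb{1}_N\otimes\big(\frac1N\sum_i q_i^*\big)=0$, and hence $\sum_{i=1}^N q_i^*=0$.

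\textbf{Step 2: verify optimality directly.} For any $z\in\bR^n$ and each $i$, the subgradient inequality gives
\[
\phi_i(z)\ge\phi_i(z^*)+\iprod{q_i^*}{z-z^*}.
\]
Summing over $i$ and using $\sum_i q_i^*=0$ yields $\phi(z)\ge\phi(z^*)$ for all $z$. Moreover $\phi(z^*)$ is finite, since each $\phi_i(z^*)$ is. Therefore $z^*$ minimizes $\phi$, and by the equivalence of \eqref{eq:IPMO_eqv} with \eqref{eq:IPMO} it is an optimal solution of Problem \eqref{eq:IPMO}. It is also feasible, because $z^*\in\bigcap_i Z_i$ by Step 1.

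\textbf{Possible obstacle.} The argument has no real difficulty. The only point to handle carefully is that we use the easy direction of the subdifferential sum rule, via the direct summation of subgradient inequalities, so that no qualification condition such as a relative-interior intersection of the $Z_i$ is needed. Assumption \ref{assu:connectivity} enters only through the definition of $W_\infty$ used in \eqref{eq:PropW1}.
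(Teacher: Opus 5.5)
Your proof is correct and is essentially the argument the paper intends when it says the lemma is ``easily derived from the above definitions.'' Blockwise you have $q_i^*\in\partial\phi_i(z^*)$, the condition $W_\infty\bq^*=0$ forces $\sum_i q_i^*=0$, and summing the subgradient inequalities (the easy inclusion $\sum_i\partial\phi_i(z^*)\subseteq\partial\phi(z^*)$) gives optimality of $z^*$ with no constraint qualification needed.
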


From \cite{FP2022}, the sequences generated by Algorithm \ref{alg:PT} running over the whole multi-agent network can be expressed in the following compact form,
\begin{subequations} \label{eq:z_upd_gradform_vec}
    \begin{align}
        &\bz^{k+1} = W\bz^{k} - c\bg^{k+1},  \label{eq:z_upd_gradform_vec1} \\
        &\bg^{k+1} = W\bg^{k} + \bq^{k+1} - \bq^{k}, \label{eq:z_upd_gradform_vec2} \\
        &\bq^{k+1} \in \partial \bphi(\bz^{k+1}).  \label{eq:z_upd_gradform_vec3}
    \end{align}
\end{subequations}

Based on the above form, the global convergence of the Proximal-Tracking algorithm has been established in \cite{FP2022} as follows.
\begin{proposition}\label{prop:global}
Under Assumptions \ref{assu:monotone} and \ref{assu:connectivity}, the sequence $\{\bz^k\}$ generated by Algorithm \ref{alg:PT} converges to a consensus solution $\bz^* := \mathbb{1}_{N} \otimes z^*$, where $z^*$ is an optimal solution to Problem \eqref{eq:IPMO}. Moreover,  the sequence $\{\bq^k\}$ converges to a vector $\bq^*$
such that $W_{\infty}\bq^*=0$ and  $\bq^*\in\partial \bphi(\bz^*)$. 
\end{proposition}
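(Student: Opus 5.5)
This global convergence result is attributed to \cite{FP2022}. To prove it within the present framework, I would first eliminate $\bg^k$ from the compact form \eqref{eq:z_upd_gradform_vec} to obtain an EXTRA-type recursion. Subtracting \eqref{eq:z_upd_gradform_vec1} at consecutive indices and using \eqref{eq:z_upd_gradform_vec2} gives
\[
\bz^{k+1}-W\bz^k = W(\bz^k - W\bz^{k-1}) - c(\bq^{k+1}-\bq^k).
\]
Since $\bg^0=\bq^0=0$, summing these relations and introducing the dual-like variable $\bv^k:=\sum_{t\le k}(I-W)\bz^t$ shows two things. First, $c W_\infty \bq^{k}$ is controlled by consensus quantities. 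Second, the iteration can be written as $c\bq^{k+1} + (I-W)\bz^{k+1}$-type terms $+\,\bu^{k+1} = W(\bz^k-\bz^{k+1})$-type terms, where $\bu^k\in\Range(I-W)$. Using Lemma \ref{lem:span}, I would then identify $\bu^k$ with an element of $\Null(W_\infty)$. This is exactly the space where the optimal multiplier $\bq^*$ lives, by Lemma \ref{lem:solution}.

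Next I would fix a KKT pair $(\bz^*,\bq^*)$ with $\bz^*=\mathbb{1}_N\otimes z^*$, $\bq^*\in\partial\bphi(\bz^*)$ and $W_\infty\bq^*=0$. Existence follows from Assumption \ref{assu:monotone} together with a constraint qualification implicit in the subdifferential sum rule. Since $\bq^*\in\Null(W_\infty)=\Range(I-W)$, I can choose $\bu^*$ with $(I-W)\bu^*$ matching $-c\bq^*$ appropriately. Monotonicity of $\partial\bphi$ gives
\[
\iprod{\bz^{k+1}-\bz^*}{\bq^{k+1}-\bq^*}\ge 0.
\]
Substituting the recursion, I would derive a Fejér-type inequality of the form
\[
\|\bx^{k+1}-\bx^*\|_G^2\le \|\bx^k-\bx^*\|_G^2-\|\bx^{k+1}-\bx^k\|_{H}^2,
\]
where $\bx^k=(\bz^k,\bu^k)$ and $G,H$ are built from $W$ and $I-W$. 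Positive definiteness of these matrices uses $W\succ0$ and $2I-W\succ0$.

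The Fejér inequality yields boundedness and $\bx^{k+1}-\bx^k\to0$. Taking a cluster point and using closedness of $\gph(\partial\bphi)$ (valid since $\bphi$ is proper, lsc and convex), the limit satisfies the KKT conditions. Consensus follows from $(I-W)\bz^{k}\to 0$. Finally, the standard Opial argument upgrades subsequential convergence to convergence of the whole sequence, and $\bq^k$ converges because it is an affine function of the convergent quantities.

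The main obstacle is choosing the metric $G$ so that $H\succeq0$ with the implicit (proximal) term. I would try $G=\mathrm{diag}(W,\,\frac1c(I-W)^{\dagger})$-like weights restricted to $\Range(I-W)$, and then check that the cross terms cancel.
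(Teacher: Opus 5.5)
The paper does not prove this proposition; it imports it from \cite{FP2022}. Your Fej\'er--Opial argument is therefore a genuinely different, self-contained route, and it is sound. The paper's later lemmas already supply every step without circularity, since Lemmas~\ref{lem:yk}, \ref{lem:F} and \ref{lem:recur} never invoke Proposition~\ref{prop:global}.

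\textbf{The metric.} Lemma~\ref{lem:recur} is exactly the Fej\'er inequality you want, in the variables $(\by,\bz)$ with the metric $\|\cdot\|_A$: weight $I$ on $\by$ and $2W-W^2$ on $\bz$. Weight $W^2$ on $\bz$ also works, via $W^2(\bz^k-\bz^{k+1})=c\bq^{k+1}+(I-W)\by^{k+1}+2W(I-W)\bz^{k+1}$. Your tentative weight $W$ on the $\bz$-block does not work as it stands. It leaves the cross term $\iprod{\bz^{k+1}-\bz^*}{W(I-W)(\bz^k-\bz^*)}$, which has no sign. Absorbing that term into the Lyapunov function is precisely what turns $W$ into $W+W(I-W)=2W-W^2$.

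\textbf{The remaining steps.} With that metric, the summable residual in the proof of Lemma~\ref{lem:recur} is the squared $A$-norm of the element of $F(\bx^{k+1})$ exhibited in (\ref{eq:Fk}). Since $\gph F$ is closed (because $\gph\partial\bphi$ is), every cluster point of $\{\bx^k\}$ lies in $F^{-1}(0)$. Opial's lemma then gives $\bx^k\to\bar{\bx}=(\bar{\by},\bar{\bz})\in F^{-1}(0)$. Lemma~\ref{lem:F} gives consensus and optimality of $\bar{\bz}$. Finally, (\ref{eq:aux2}) gives $\bq^{k+1}\to-\frac1c(I-W)\bar{\by}\in\partial\bphi(\bar{\bz})\cap\Null(W_\infty)$.

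\textbf{The existence of a KKT pair.} What you must make explicit is the starting KKT pair, i.e.\ $F^{-1}(0)\neq\emptyset$. This is \emph{not} implied by Assumption~\ref{assu:monotone}. Take $N=n=2$, let $Z_1$ and $Z_2$ be the closed unit disks centred at $(0,1)$ and $(0,-1)$, and set $f_1(z)=z_1$, $f_2\equiv0$. Then $Z_1\cap Z_2=\{0\}$, so $z^*=0$ is optimal and Assumption~\ref{assu:monotone} holds. But $\partial\phi_1(0)+\partial\phi_2(0)=(1,0)+\{0\}\times\bR\not\ni0$. Hence no $\bq^*\in\partial\bphi(\bz^*)$ with $W_\infty\bq^*=0$ exists, and the conclusion of the proposition fails.

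So the constraint qualification you invoke is a genuine additional hypothesis that the statement, as reproduced here, silently needs; it is not implicit in Assumption~\ref{assu:monotone}. One sufficient choice is $\bigcap_i\mathrm{ri}\,Z_i\neq\emptyset$, given that the $f_i$ are finite-valued. State it as an assumption; under it, your proof establishes the proposition.

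Compared with the bare citation, your route is self-contained and reuses the same EXTRA-type machinery as the rate analysis. It also exposes a hypothesis the paper relies on again when it asserts $\cQ^*\neq\emptyset$ before Assumption~\ref{assu:calm}.
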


%------------------------------------------------------------------------
%------------------------------------------------------------------------
%------------------------------------------------------------------------
\section{Convergence rate analysis}\label{sec:rate}

In order to establish the convergence rate of Algorithm \ref{alg:PT}, let us first present a series of useful auxiliary lemmas.

Denote
\[
\by^{k} := \sum_{t=0}^{k} (I-W) \bz^t.
\]
In the following lemma, we present an equivalent EXTRA-type reformulation of Proximal-Tracking, which is the starting point for linear convergence analysis.
\begin{lemma}\label{lem:yk}
Under Assumptions \ref{assu:monotone} and \ref{assu:connectivity}, for all $k\geq 0$, we have
\begin{align}
    \bz^{k+1} &= W^2 \bz^k -c\bq^{k+1}-(I-W)\by^k ,\label{eq:EXTRA_a} \\
    \by^{k+1} &= \by^{k} + (I-W)\bz^{k+1}. \label{eq:EXTRA_b}
\end{align}
\end{lemma}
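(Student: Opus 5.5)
The plan is to prove \eqref{eq:EXTRA_b} directly and to obtain \eqref{eq:EXTRA_a} from an auxiliary identity for the tracker, proved by induction on $k$. Identity \eqref{eq:EXTRA_b} follows immediately from the definition $\by^{k}=\sum_{t=0}^{k}(I-W)\bz^t$. For \eqref{eq:EXTRA_a}, I will substitute into the compact form \eqref{eq:z_upd_gradform_vec1}, $\bz^{k+1}=W\bz^k-c\bg^{k+1}$. This reduces \eqref{eq:EXTRA_a} to the equivalent claim
\[
c\bg^{k+1} = W(I-W)\bz^k + c\bq^{k+1} + (I-W)\by^k \qquad (k\ge 0).
\]
Indeed, plugging this into \eqref{eq:z_upd_gradform_vec1} gives $\bz^{k+1}=W\bz^k - W\bz^k+W^2\bz^k - c\bq^{k+1}-(I-W)\by^k$, which is exactly \eqref{eq:EXTRA_a}. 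Conversely, \eqref{eq:EXTRA_a} at step $k$ together with \eqref{eq:z_upd_gradform_vec1} gives the claim at step $k$. So the two are interchangeable at each index, and I can use \eqref{eq:EXTRA_a} at step $k$ inside the induction.

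\emph{Base case.} The initialization of Algorithm \ref{alg:PT} gives $\bz^0=\bg^0=\bq^0=0$, hence $\by^0=(I-W)\bz^0=0$. By \eqref{eq:z_upd_gradform_vec2}, $\bg^1=\bq^1$, so both sides of the claim equal $c\bq^1$.

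\emph{Inductive step.} Assume the claim, equivalently \eqref{eq:EXTRA_a}, at index $k$. Using \eqref{eq:z_upd_gradform_vec2},
\[
c\bg^{k+2}=W\,c\bg^{k+1}+c\bq^{k+2}-c\bq^{k+1}
=W^2(I-W)\bz^k + W(I-W)\by^k + c\bq^{k+2} - (I-W)c\bq^{k+1}.
\]
To remove the stray $c\bq^{k+1}$ term, I will use \eqref{eq:EXTRA_a} at step $k$ in the form $c\bq^{k+1}=W^2\bz^k-\bz^{k+1}-(I-W)\by^k$. Multiplying this by $-(I-W)$ and substituting, the $W^2(I-W)\bz^k$ terms cancel. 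The $\by^k$ terms combine as $W(I-W)\by^k+(I-W)^2\by^k=(I-W)\by^k$. What remains is
\[
c\bg^{k+2}=c\bq^{k+2}+(I-W)\by^k+(I-W)\bz^{k+1}.
\]
Finally, I split $(I-W)\bz^{k+1}=W(I-W)\bz^{k+1}+(I-W)^2\bz^{k+1}$ and use \eqref{eq:EXTRA_b} to write $(I-W)\by^k+(I-W)^2\bz^{k+1}=(I-W)\by^{k+1}$. This yields the claim at index $k+1$.

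Throughout, the only facts used are that all matrices involved are polynomials in $W$, and hence commute, together with the zero initialization. The main obstacle is finding the right auxiliary identity for $c\bg^{k+1}$. A naive substitution produces the coefficient $2W-W^2$ instead of $W^2$, and it is the feedback of \eqref{eq:EXTRA_a} itself into the recursion that closes the induction.
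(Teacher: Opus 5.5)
Your proof is correct and is essentially the paper's argument: an induction on \eqref{eq:EXTRA_a}, where the hypothesis at step $k$ is fed back to eliminate $c\bq^{k+1}$ and \eqref{eq:EXTRA_b} is used to pass from $(I-W)\by^k$ to $(I-W)\by^{k+1}$. The only difference is bookkeeping: the paper first eliminates the tracker to get the two-step recursion $\bz^{k+1}=2W\bz^k-W^2\bz^{k-1}-c(\bq^{k+1}-\bq^k)$ and inducts on $\bz$ directly, whereas you carry the equivalent hypothesis as an identity for $c\bg^{k+1}$ and apply \eqref{eq:z_upd_gradform_vec2} in the inductive step.
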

\begin{proof}
The claim (\ref{eq:EXTRA_b}) is obvious from the definition of $\by^k$.
From (\ref{eq:z_upd_gradform_vec1}) and (\ref{eq:z_upd_gradform_vec2}) it follows that
\be\label{eq:aux1}
\bz^{k+1}=2W\bz^k-W^2\bz^{k-1}-c(\bq^{k+1}-\bq^k).
\ee

Let us prove (\ref{eq:EXTRA_a}) by induction. For $k=0$, the result is obvious from the fact that $\bz^0=\by^0=0$ and the subgradient update equation in line 5 of Algorithm \ref{alg:PT}. Suppose that (\ref{eq:EXTRA_a}) holds true for $k\geq 0$.
From (\ref{eq:aux1}) we have
\[
\bz^{k+2}=2W\bz^{k+1}-W^2\bz^{k}-c(\bq^{k+2}-\bq^{k+1}),
\]
which, together with (\ref{eq:EXTRA_a}) for $k$, yields
\[
\bz^{k+2}=2W\bz^{k+1}-c\bq^{k+2}-\bz^{k+1}-(I-W)\by^{k}.
\]
Noticing that $(I-W)\by^{k} = (I-W)\by^{k+1} - (I-W)^2\bz^{k+1}$ from (\ref{eq:EXTRA_b}), we further have
\[
\bz^{k+2}=W^2\bz^{k+1}-c\bq^{k+2}-(I-W)\by^{k+1},
\]
which completes the proof. \hfill $\square$
\end{proof}

Recall that $W\succ 0$ and $2I-W\succ 0$, thus $(2W-W^2)$ is also positive definite.
For all $k\geq 0$, let $\bx^{k}:=(\by^k,\bz^k)$.
For any $\bx:=(\by,\bz)\in\bR^{Nn}\times\bR^{Nn}$, define the following set-valued mapping 
\[
F(\bx):=    \begin{bmatrix}
        (I-W)\bz \\
        (2W-W^2)^{-1}(c\partial\bphi(\bz)+(I-W)\by)
    \end{bmatrix}.
\]
\begin{lemma}\label{lem:F}
Under Assumptions \ref{assu:monotone} and \ref{assu:connectivity}, for all $k\geq 0$, we have 
\be\label{eq:Fk}
 \begin{bmatrix}
        \by^{k+1} - \by^{k} \\
      (2I-W)^{-1}W\bz^k-\bz^{k+1}  
    \end{bmatrix}\in F(\bx^{k+1}).
\ee
For any $(\by^*,\bz^*)\in F^{-1}(0)$, it holds that $-\frac{1}{c}(I-W)\by^*\in\partial \bphi(\bz^*)$ and  $\bz^* := \mathbb{1}_{N} \otimes z^*$, where $z^*$ is an optimal solution to Problem \eqref{eq:IPMO}.
\end{lemma}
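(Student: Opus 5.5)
The plan is to verify the two assertions of Lemma \ref{lem:F} separately. The first, the inclusion (\ref{eq:Fk}), will come from the EXTRA-type form of Lemma \ref{lem:yk} after multiplying out by $2W-W^2$. The second, the characterization of $F^{-1}(0)$, will come from reading off each block of $F(\bx^*)\ni 0$ and then invoking Lemma \ref{lem:solution}.

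\textbf{First block of (\ref{eq:Fk}).} By (\ref{eq:EXTRA_b}) we have $\by^{k+1}-\by^k=(I-W)\bz^{k+1}$, which is exactly the first block of $F(\bx^{k+1})$.

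\textbf{Second block of (\ref{eq:Fk}).} Since $2W-W^2$ is invertible, it suffices to show
\[
(2W-W^2)\big[(2I-W)^{-1}W\bz^k-\bz^{k+1}\big]=c\bq^{k+1}+(I-W)\by^{k+1}
\]
with $\bq^{k+1}\in\partial\bphi(\bz^{k+1})$, which holds by (\ref{eq:z_upd_gradform_vec3}). The key observation is that $2W-W^2=W(2I-W)$, and all these matrices are polynomials in $W$, so they commute. Hence $(2W-W^2)(2I-W)^{-1}W=W^2$, and the left-hand side equals $W^2\bz^k-(2W-W^2)\bz^{k+1}$.

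For the right-hand side, (\ref{eq:EXTRA_a}) gives $c\bq^{k+1}=W^2\bz^k-\bz^{k+1}-(I-W)\by^k$. Applying $(I-W)$ to (\ref{eq:EXTRA_b}) gives $(I-W)\by^{k+1}=(I-W)\by^k+(I-W)^2\bz^{k+1}$. Adding these two identities yields
\[
W^2\bz^k-\bz^{k+1}+(I-2W+W^2)\bz^{k+1}=W^2\bz^k-(2W-W^2)\bz^{k+1},
\]
which matches the left-hand side. This is the only computational step, and it is routine once the commutation is noted.

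\textbf{Characterization of $F^{-1}(0)$.} Let $(\by^*,\bz^*)\in F^{-1}(0)$.
\begin{itemize}
\item The first block gives $(I-W)\bz^*=0$. Under Assumption \ref{assu:connectivity} the eigenvalue $1$ of $\mathcal{W}$ is simple with eigenvector $\mathbb{1}_N$. Hence $\Null(I-W)=\{\mathbb{1}_N\otimes\theta:\theta\in\bR^n\}$, so $\bz^*=\mathbb{1}_N\otimes z^*$ for some $z^*$. Equivalently, $\Null(I-W)=\Range(I-W)^\perp=\Null(W_\infty)^\perp=\Range(W_\infty)$ by Lemma \ref{lem:span} and symmetry.
\item The second block, after multiplying by the invertible $2W-W^2$, gives $0\in c\,\partial\bphi(\bz^*)+(I-W)\by^*$. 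That is, $\bq^*:=-\frac1c(I-W)\by^*\in\partial\bphi(\bz^*)$.
\item By (\ref{eq:PropW2}), $W_\infty(I-W)=0$, so $W_\infty\bq^*=0$.
\end{itemize}
Lemma \ref{lem:solution} then shows that $z^*$ is an optimal solution of Problem \eqref{eq:IPMO}.

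The only point needing care is the identification of $\Null(I-W)$ with the consensus subspace, and that is handled by connectivity or Lemma \ref{lem:span}.
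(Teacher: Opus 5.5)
Your proof is correct and follows essentially the paper's route: you derive the identity $W^2\bz^k-(2W-W^2)\bz^{k+1}=c\bq^{k+1}+(I-W)\by^{k+1}$ from Lemma \ref{lem:yk}, invert $2W-W^2$, then read off the two blocks of $0\in F(\bx^*)$ and apply Lemma \ref{lem:solution}. The only differences are that you supply details the paper leaves implicit (the explicit algebra behind that identity and the identification of $\Null(I-W)$ with the consensus subspace), and that you get $W_\infty\bq^*=0$ directly from (\ref{eq:PropW2}) rather than citing Lemma \ref{lem:span}, which amounts to the same inclusion.
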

\begin{proof}
    From (\ref{eq:EXTRA_a}) and (\ref{eq:EXTRA_b}), we can obtain that
 \be\label{eq:aux2}
 W^2\bz^k-(2W-W^2)\bz^{k+1}=c\bq^{k+1}+(I-W)\by^{k+1}
 \ee 
 and hence
 \[
 (2I-W)^{-1}W\bz^k-\bz^{k+1}=(2W-W^2)^{-1}(c\bq^{k+1}+(I-W)\by^{k+1}),
 \]
 which, together with  (\ref{eq:z_upd_gradform_vec3}) and (\ref{eq:EXTRA_b}), implies
(\ref{eq:Fk}).

For any $(\by^*,\bz^*)\in F^{-1}(0)$, we have
\[
\begin{array}{ll}
0\in c\partial\bphi(\bz^*)+(I-W)\by^*,\\[5pt]
(I-W)\bz^*=0.
\end{array}
\]
Recall that $W= \mathcal{W} \otimes I_{n}$, the fact  $(I-W)\bz^*=0$ obviously indicates that $\bz^*$ is in the form of $\bz^* = \mathbb{1}_{N} \otimes z^*$. Let $\widehat{\by}^*:=-\frac{1}{c}(I-W)\by^*$. We have that $\widehat{\by}^*\in\partial \bphi(\bz^*)$. From Lemma \ref{lem:span}, we also have $W_{\infty}\widehat{\by}^*=0$. Therefore, from Lemma \ref{lem:solution}, it follows that $z^*$ is an optimal solution to Problem \eqref{eq:IPMO}. \hfill $\square$
\end{proof}

Based on Lemma \ref{lem:F} and partially motivated by \cite{EXTRA,PG-EXTRA},  the convergence rate of Proximal-Tracking shall be established in this section by investigating the convergence behavior of $(\bz^k,\by^k)$ to $(\bz^*,\by^*)\in F^{-1}(0)$.

 For any $\bx=(\by,\bz)\in\bR^{Nn}\times\bR^{Nn}$, define
\[
\|\bz\|_{(2W-W^2)}^2:=\bz^T(2W-W^2)\bz
\]
and
\[
\|\bx\|_A^2:=\|\by\|^2+\|\bz\|_{(2W-W^2)}^2.
\]
Moreover, for a set $\Omega$, denote 
\[\dist_A^2(\bx,\Omega)=\min_{\bx'\in\Omega}\|\bx-\bx'\|_A^2.\]
In the following lemma we obtain the recursive relation of $\|\bx^k-\bx^*\|_A^2$.
\begin{lemma}\label{lem:recur}
    Under Assumptions \ref{assu:monotone} and \ref{assu:connectivity}, for any $\bx^*=(\by^*,\bz^*)\in F^{-1}(0)$ and for all $k\geq 0$, one has
    \[
    \|\bx^k-\bx^*\|_A^2\geq \|\bx^{k+1}-\bx^*\|_A^2+\dist_A^2(0,F(\bx^{k+1})).
    \]
\end{lemma}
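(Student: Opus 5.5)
The plan is to take the explicit element of $F(\bx^{k+1})$ given by Lemma \ref{lem:F},
\[
\bv^{k+1}:=\begin{bmatrix}\by^{k+1}-\by^k\\ (2I-W)^{-1}W\bz^k-\bz^{k+1}\end{bmatrix}\in F(\bx^{k+1}),
\]
and use $\dist_A^2(0,F(\bx^{k+1}))\le\|\bv^{k+1}\|_A^2$. It then suffices to prove
\[
\|\bx^k-\bx^*\|_A^2-\|\bx^{k+1}-\bx^*\|_A^2\ge\|\bv^{k+1}\|_A^2 .
\]
The source of this inequality will be monotonicity of $\partial\bphi$. By Lemma \ref{lem:F}, $-\frac1c(I-W)\by^*\in\partial\bphi(\bz^*)$. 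By (\ref{eq:aux2}), $c\bq^{k+1}=W^2\bz^k-(2W-W^2)\bz^{k+1}-(I-W)\by^{k+1}$ with $\bq^{k+1}\in\partial\bphi(\bz^{k+1})$. Monotonicity therefore gives
\[
\iprod{W^2\bz^k-(2W-W^2)\bz^{k+1}-(I-W)(\by^{k+1}-\by^*)}{\bz^{k+1}-\bz^*}\ge 0 .
\]

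Next I would use that $\bz^*$ is consensual, so $W\bz^*=\bz^*$. Hence $W^2\bz^*=(2W-W^2)\bz^*=\bz^*$ and $(2I-W)^{-1}W\bz^*=\bz^*$, and $\bz^*$ can be subtracted inside each term. For the $\by$-part, symmetry of $I-W$ together with (\ref{eq:EXTRA_b}) and $(I-W)\bz^*=0$ gives
\[
\iprod{(I-W)(\by^{k+1}-\by^*)}{\bz^{k+1}-\bz^*}=\iprod{\by^{k+1}-\by^*}{\by^{k+1}-\by^k}.
\]
By the polarization identity, minus this term equals $\tfrac12(\|\by^k-\by^*\|^2-\|\by^{k+1}-\by^*\|^2-\|\by^{k+1}-\by^k\|^2)$.

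For the $\bz$-part, set $M:=2W-W^2\succ0$. Then $W^2=M(2I-W)^{-1}W$, since all these matrices are polynomials in $W$ and commute. Put $\boldsymbol a:=\bz^{k+1}-\bz^*$ and $\boldsymbol b:=(2I-W)^{-1}W(\bz^k-\bz^*)$. The $\bz$-part becomes $\iprod{\boldsymbol b-\boldsymbol a}{\boldsymbol a}_M=\tfrac12(\|\boldsymbol b\|_M^2-\|\boldsymbol a\|_M^2-\|\boldsymbol b-\boldsymbol a\|_M^2)$. Here $\boldsymbol b-\boldsymbol a$ is exactly the second block of $\bv^{k+1}$.

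The one nontrivial step is to show $\|\boldsymbol b\|_M^2\le\|\bz^k-\bz^*\|_M^2$. This is the matrix inequality $W^3(2I-W)^{-1}\preceq 2W-W^2$, which I would check eigenvalue-wise. For each eigenvalue $\lambda\in(0,1]$ of $W$, it reads $\lambda^3/(2-\lambda)\le\lambda(2-\lambda)$, which is equivalent to $\lambda\le 2-\lambda$, i.e. $\lambda\le1$, and this holds. Adding the two parts and using this bound in the monotonicity inequality yields
\[
\|\by^k-\by^*\|^2+\|\bz^k-\bz^*\|_M^2-\|\by^{k+1}-\by^*\|^2-\|\bz^{k+1}-\bz^*\|_M^2\ge\|\by^{k+1}-\by^k\|^2+\|(2I-W)^{-1}W\bz^k-\bz^{k+1}\|_M^2 .
\]
This is the claimed estimate, since the right-hand side is $\|\bv^{k+1}\|_A^2\ge\dist_A^2(0,F(\bx^{k+1}))$.
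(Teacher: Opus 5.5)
Your proof is correct and follows essentially the same route as the paper. Both arguments use the element of $F(\bx^{k+1})$ from Lemma \ref{lem:F}, expand the $\by$- and $\bz$-parts by polarization, close with monotonicity of $\partial\bphi$, and rely on the contraction $\|(2I-W)^{-1}W(\bz^k-\bz^*)\|_{(2W-W^2)}\le\|\bz^k-\bz^*\|_{(2W-W^2)}$. The only difference is in that last step: the paper justifies it via $\lambda_{\max}((2I-W)^{-1}W)=1$, while you verify $W^3(2I-W)^{-1}\preceq 2W-W^2$ eigenvalue by eigenvalue, which makes the implicit commutation argument explicit.
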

\begin{proof}
Since $(\by^*,\bz^*)\in F^{-1}(0)$, we have
\[
-\frac{1}{c}(I-W)\by^*\in\partial \bphi(\bz^*),\quad (I-W)\bz^*=0.
\]
Then, one has $\bz^*=W\bz^*$ and hence
\[
(2I-W)^{-1}W\bz^*=\bz^*.
\]
Note that $\lambda_{\max}(W)=1$ and hence $\lambda_{\max}((2I-W)^{-1}W)=\lambda_{\max}(W)/(2-\lambda_{\max}(W))=1$, then
\be\label{eq:aux3}
\begin{array}{ll}
\|\bz^k-\bz^*\|_{(2W-W^2)}^2\\[5pt]
\geq \|(2I-W)^{-1}W(\bz^k-\bz^*)\|_{(2W-W^2)}^2\\[5pt]
= \|(2I-W)^{-1}W\bz^k-\bz^*\|_{(2W-W^2)}^2\\[5pt]
=\|\bz^{k+1}-\bz^*\|_{(2W-W^2)}^2+\|(2I-W)^{-1}W\bz^k-\bz^{k+1}\|_{(2W-W^2)}^2\\[5pt]
\quad+2\iprod{\bz^{k+1}-\bz^*}{W^2\bz^k-(2W-W^2)\bz^{k+1}}\\[5pt]
=\|\bz^{k+1}-\bz^*\|_{(2W-W^2)}^2+\|(2I-W)^{-1}W\bz^k-\bz^{k+1}\|_{(2W-W^2)}^2\\[5pt]
\quad+2\iprod{\bz^{k+1}-\bz^*}{c\bq^{k+1}+(I-W)\by^{k+1}},
\end{array}
\ee
where the last equality is from (\ref{eq:aux2}).
From (\ref{eq:EXTRA_b}) and $(I-W)\bz^*=0$ it follows that $\by^{k+1}- \by^{k} = (I-W)(\bz^{k+1}-\bz^*)$ and thus
\be\label{eq:aux4}
\begin{array}{ll}
\|\by^k-\by^*\|^2&=\|\by^{k+1}-\by^*\|^2+\|\by^k-\by^{k+1}\|^2\\[5pt]
 &\quad+2\iprod{\by^{k+1}- \by^{k} }{\by^*-\by^{k+1}}\\[5pt]
&=\|\by^{k+1}-\by^*\|^2+\|\by^k-\by^{k+1}\|^2\\[5pt]
 &\quad+2\iprod{\bz^{k+1}-\bz^*}{(I-W)(\by^*-\by^{k+1})}.
\end{array}
\ee
Noticing that $-\frac{1}{c}(I-W)\by^*\in\partial \bphi(\bz^*)$  and (\ref{eq:z_upd_gradform_vec3}), 
we have from the monotonicity of $\partial\bphi$ that
\be\label{eq:aux5}
\begin{array}{ll}
2\iprod{\bz^{k+1}-\bz^*}{c\bq^{k+1}+(I-W)\by^{k+1}+(I-W)(\by^*-\by^{k+1})}\\[5pt]
=2c\iprod{\bz^{k+1}-\bz^*}{\bq^{k+1}+\frac{1}{c}(I-W)\by^*}\\[5pt]
\geq 0.
\end{array}
\ee
By combining (\ref{eq:aux3}), (\ref{eq:aux4}) and (\ref{eq:aux5}) together and rearranging terms, we derive 
    \[
    \begin{array}{ll}
    \|\bx^k-\bx^*\|_A^2&\geq \|\bx^{k+1}-\bx^*\|_A^2+\|\by^k-\by^{k+1}\|^2\\[5pt]
    &\quad +\|(2I-W)^{-1}W\bz^k-\bz^{k+1}\|_{(2W-W^2)}^2.
    \end{array}
    \]
Finally, from (\ref{eq:Fk}), we have
\[
  \begin{array}{ll}
\|\by^k-\by^{k+1}\|^2+\|(2I-W)^{-1}W\bz^k-\bz^{k+1}\|_{(2W-W^2)}^2\\[5pt]\geq \dist_A^2(0,F(\bx^{k+1})).
    \end{array}
\]
The proof is completed.\hfill $\square$
\end{proof}

In the sequel, we require additional assumptions to establish the linear convergence of Algorithm \ref{alg:PT}. 
\begin{assumption}\label{assu:strong}
Assume that each local objective function $f_i$ of agent $i$ is strongly convex with modulus $\mu>0$.
\end{assumption}
Assumption \ref{assu:strong}
 is a standard assumption to study the linear convergence of distributed algorithms, see \cite{EXTRA,DIGing,QL2018} for example.
 Under Assumptions \ref{assu:monotone} and \ref{assu:strong}, we have that $\bphi$ is proper, lower semicontinuous, and $\mu$-strongly convex, then from \cite[Theorem 12.17]{RW1998} and \cite[Exercise 12.59]{RW1998} it is well known that the mapping $\partial\bphi$ is maximal monotone and strongly monotone with modulus $\mu$. 
 %Furthermore, from \cite[Proposition 12.54]{RW1998} and \cite[Proposition 3G.1]{DR2009}, one can obtain that $\partial\bphi$ is metric regular. 

Under Assumption \ref{assu:strong}, we have that the solution to Problem (\ref{eq:IPMO}) is unique. For convenience, throughout we let $z^*$ be the unique solution and $\bz^* := \mathbb{1}_{N} \otimes z^*$. Recall from Proposition \ref{prop:global} and Lemma \ref{lem:span} that $\cQ^*:=\partial\bphi(\bz^*)\cap\Range(I-W)\neq\emptyset$.
\begin{assumption}\label{assu:calm}
There exist $\ell\geq 0$, $\varrho\geq 0$ and $\delta\in(0,+\infty]$ such that for any $\bz$ with $\partial\bphi(\bz)\neq\emptyset$ and $\|\bz-\bz^*\|\leq\delta$ and every $\bq\in\partial\bphi(\bz)$,
\be\label{eq:calm}
\dist(\bq,\cQ^*)\leq\ell\|\bz-\bz^*\|+\varrho\dist(\bq,\Range(I-W)).
\ee
\end{assumption}
Assumption \ref{assu:calm} can be seen as a combination of a calmness property of $\partial\bphi$ at $\bz^*$ and a linear regularity property of the pair $(\partial\bphi(\bz^*),\Range(I-W))$.  If $\delta=+\infty$, we say that (\ref{eq:calm}) holds globally. Sufficient conditions to ensure Assumption \ref{assu:calm}  are collected in Remark \ref{rem:calm}. Assumption \ref{assu:calm} will be used to derive the metric subregularity of $F$ at $0$, which is a key property to establish the linear convergence of Proximal-Tracking.

\begin{lemma}\label{lem:metric}
Suppose that Assumptions \ref{assu:monotone}, \ref{assu:connectivity}, \ref{assu:strong} and \ref{assu:calm} hold true.
Set
\begin{equation}\label{eq:kappa}
C:=\frac{1}{c\mu}+\frac{\ell}{\mu\sigma}+\sqrt{\frac{1+\varrho}{2\sigma c\mu}},\ \kappa:=\sqrt{C^2+\Bigl(\frac{1+\varrho+c\ell C}{\sigma}\Bigr)^{2}} .
\end{equation}
Then for every $\bx=(\by,\bz)$ with $\partial\bphi(\bz)\neq\emptyset$ and
$\|\bz-\bz^*\|\le\delta$,
\begin{equation}\label{eq:new}
\dist\bigl(\bx,F^{-1}(0)\bigr)\ \le\ \kappa\,\dist\bigl(0,F(\bx)\bigr).
\end{equation}
In particular, if (\ref{eq:calm}) holds globally, then \eqref{eq:new} holds for
every $\bx$ with $\partial\bphi(\bz)\neq\emptyset$.
\end{lemma}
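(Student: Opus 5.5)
Fix $\bx=(\by,\bz)$ with $\partial\bphi(\bz)\neq\emptyset$ and $\|\bz-\bz^*\|\le\delta$, and pick $\bv=(\bv_1,\bv_2)\in F(\bx)$ with $\|\bv\|=\dist(0,F(\bx))$ (attained, since $\partial\bphi(\bz)$ is closed convex). Then
\[
\bv_1=(I-W)\bz,\qquad (2W-W^2)\bv_2=c\bq+(I-W)\by
\]
for some $\bq\in\partial\bphi(\bz)$. The plan is to construct an explicit point $\bx^\sharp=(\by^\sharp,\bz^*)\in F^{-1}(0)$. We bound $\|\bz-\bz^*\|\le C\|\bv\|$ and $\|\by-\by^\sharp\|\le \frac{1+\varrho+c\ell C}{\sigma}\|\bv\|$; squaring and adding then gives \eqref{eq:new} with the stated $\kappa$. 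By Lemma \ref{lem:F} and its converse (any $\bz^*$ and $\by^\sharp$ with $-\frac1c(I-W)\by^\sharp\in\partial\bphi(\bz^*)$ lie in $F^{-1}(0)$), it suffices to pick $\by^\sharp$ with $-\frac1c(I-W)\by^\sharp=\bq^\sharp\in\cQ^*$. Such a $\by^\sharp$ exists because $\cQ^*\subseteq\Range(I-W)$.

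\textbf{Step 1: bound on $\bz$.} Write $\bz-\bz^*=\bu+\bu^\perp$ with $\bu=W_\infty(\bz-\bz^*)$ consensual and $\bu^\perp\in\Range(I-W)$ (Lemma \ref{lem:span}). Then $\|\bu^\perp\|\le\|\bv_1\|/\sigma$ by the spectral gap property. Take any $\bq^*\in\cQ^*$ and use strong monotonicity:
\[
\mu\|\bz-\bz^*\|^2\le\iprod{\bz-\bz^*}{\bq-\bq^*}.
\]
Substitute $c\bq=(2W-W^2)\bv_2-(I-W)\by$. The term $\iprod{\bz-\bz^*}{(I-W)\by}=\iprod{\bv_1}{\by}$ is awkward because $\by$ is unbounded. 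So instead split $\bq-\bq^*$ along the two components. On $\bu$, both $(I-W)\by$ and $\bq^*$ are annihilated by $W_\infty$, giving $|\iprod{\bu}{\bq-\bq^*}|\le \frac1c\|\bu\|\|\bv_2\|$. On $\bu^\perp$, use $\|\bq-\bq^*\|$ with $\bq^*$ the projection of $\bq$ onto $\cQ^*$, and apply Assumption \ref{assu:calm}. Since $\dist(\bq,\Range(I-W))=\|W_\infty\bq\|\le\frac1c\|\bv_2\|$, this gives
\[
\dist(\bq,\cQ^*)\le\ell\|\bz-\bz^*\|+\frac{\varrho}{c}\|\bv_2\|.
\]
Combining yields a quadratic inequality in $t=\|\bz-\bz^*\|$:
\[
\mu t^2\le \tfrac1c t\|\bv\|+\tfrac{\ell}{\sigma}t\|\bv\|+\tfrac{1+\varrho}{c\sigma}\|\bv\|^2\cdot(\text{const}).
\]
Solving via $t\le \frac{b}{\mu}+\sqrt{a/\mu}$ should give exactly $t\le C\|\bv\|$. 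The $\sqrt{(1+\varrho)/(2\sigma c\mu)}$ term suggests using Young's inequality with factor $1/2$ at the cross term $\|\bu^\perp\|\cdot\|\bq-\bq^*\|$. Matching the constants is bookkeeping.

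\textbf{Step 2: bound on $\by$.} Choose $\bq^\sharp=\Pi_{\cQ^*}(\bq)$ and $\by^\sharp$ so that the projection of $\by-\by^\sharp$ onto $\Range(I-W)$ satisfies
\[
(I-W)(\by-\by^\sharp)=(2W-W^2)\bv_2-c(\bq-\bq^\sharp),
\]
and so that the component in $\Null(I-W)$ is zero, i.e.\ $\by^\sharp=\by-(I-W)^{\dagger}[\cdots]$. The right side lies in $\Range(I-W)$, since the left side does, provided $\by-\by^\sharp$ is chosen in $\Range(I-W)$; this needs care because $(2W-W^2)\bv_2-c\bq$ must lie in $\Range(I-W)$. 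It does, as it equals $-(I-W)\by$ up to adding back $c\bq^\sharp$. Then, by the spectral gap,
\[
\|\by-\by^\sharp\|\le\frac1\sigma\bigl(\|\bv_2\|+c\,\dist(\bq,\cQ^*)\bigr)\le\frac1\sigma\bigl(\|\bv\|+c\ell C\|\bv\|+\varrho\|\bv\|\bigr),
\]
using $\|2W-W^2\|\le1$ and Step 1. This is exactly the second term of $\kappa$.

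\textbf{Main obstacle.} The hard part is Step 1: handling the cross term with the unbounded $\by$ correctly, i.e.\ exploiting $\iprod{\bu}{(I-W)\by}=0$, and tracking the constants so they reproduce $C$. The global case follows verbatim with $\delta=+\infty$.
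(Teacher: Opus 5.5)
Your proof is correct. Step 2 coincides with the paper's construction: your $\by^\sharp=\by-(I-W)^\dagger[(I-W)\by+c\bq^\sharp]=W_\infty\by-c(I-W)^\dagger\bq^\sharp$ is exactly the paper's $\by^*$, and your $\by$-estimate is the paper's.

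Step 1 takes a different route. The paper starts from the strong monotonicity inequality and substitutes $c(\bq-\bq^\sharp)=(2W-W^2)\bv_2-(I-W)(\by-\by^\sharp)$. The awkward term then becomes $\iprod{\by-\by^\sharp}{\bv_1}$. The paper controls it by first proving $\|\by-\by^\sharp\|\le\frac1\sigma\bigl((1+\varrho)\|\bv_2\|+c\ell t\bigr)$ and plugging this back in, so its $\bz$-estimate is derived from its $\by$-estimate.

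You never touch $\by$. You split $\bz-\bz^*=\bu+\bu^\perp$. On the consensus part you use $W_\infty\bq^\sharp=0$ and $cW_\infty\bq=W_\infty\bv_2$. On the disagreement part you use $\|\bu^\perp\|\le\|\bv_1\|/\sigma$ together with (\ref{eq:calm}). This gives directly
\[
c\mu t^2\le t\|\bv_2\|+\frac{c\ell}{\sigma}\,t\|\bv_1\|+\frac{\varrho}{\sigma}\|\bv_1\|\|\bv_2\| .
\]
This decouples the two estimates and is slightly sharper. The constant term carries $\varrho$ instead of the paper's $1+\varrho$; the extra $1$ in the paper comes from the $\|(2W-W^2)\bv_2\|$ contribution to the $\by$-bound.

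Consequently you will not recover $C$ "exactly", as you anticipated. You get $t\le C'\|\bv\|$ with $C'=\frac{1}{c\mu}+\frac{\ell}{\mu\sigma}+\sqrt{\frac{\varrho}{2\sigma c\mu}}\le C$, which suffices and gives a correspondingly smaller $\kappa$.

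One small correction: the factor $\frac12$ comes from $\|\bv_1\|\|\bv_2\|\le\frac12\|\bv\|^2$. It does not come from a Young step on $\|\bu^\perp\|\,\|\bq-\bq^\sharp\|$.
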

\begin{proof}
For notational brevity, write
\[
P:=I-W,\ M:=2W-W^2,\ S:=I-W_{\infty}.
\]
The set $\partial\bphi(\bz)$ is nonempty, closed
and convex, hence so is
$F(\bx)=\{P\bz\}\times M^{-1}\bigl(c\,\partial\bphi(\bz)+P\by\bigr)$, and the
distance $\eta:=\dist(0,F(\bx))$ is attained at some
$\bu=(\bu_1,\bu_2)\in F(\bx)$. Then, there is $\bq\in\partial\bphi(\bz)$ such that
\begin{equation}\label{eq:pf1}
\bu_1=P\bz,\qquad M\bu_2=c\,\bq+P\by .
\end{equation}

\emph{Step 1: construction of a candidate solution.}
Let $\bq^*\in\cQ^*$ satisfy $\|\bq-\bq^*\|=\dist(\bq,\cQ^*)$ and let
\[
\by^*:=W_\infty\by-c\,P^\dagger\bq^*,\qquad \bx^*:=(\by^*,\bz^*).
\]
Since $PW_\infty=0$ and $PP^\dagger$ is the identity on $\Range(P)\ni\bq^*$, we get $P\by^*=-c\,\bq^*$, whence
$0\in M^{-1}(c\,\partial\bphi(\bz^*)+P\by^*)$; together with $P\bz^*=0$ this
gives $\bx^*\in F^{-1}(0)$. 
Put \[a:=\|\bz-\bz^*\|,\quad b:=\|\by-\by^*\|.\]
Note that $\by-\by^*=S\by+cP^\dagger\bq^*\in\Range(P)$.

\emph{Step 2: the residual controls the consensus part of $\bq$.}
Applying $W_\infty$ to \eqref{eq:pf1} and using $W_\infty P=0$,
$W_\infty M=MW_\infty$ and $M\bv=\bv$ on $\Range(W_\infty)$,
\[
c\,W_\infty\bq=W_\infty M\bu_2=W_\infty\bu_2 ,
\]
and hence, 
\begin{equation}\label{eq:pf2}
\dist\bigl(\bq,\Range(P)\bigr)=\|W_\infty\bq\|=\tfrac1c\|W_\infty\bu_2\|\le\tfrac1c\|\bu_2\| .
\end{equation}
Combining \eqref{eq:pf2} with \eqref{eq:calm} yields
\begin{equation}\label{eq:pf3}
c\,\|\bq-\bq^*\|\ \le\ c\ell\,a+\varrho\,\|\bu_2\| .
\end{equation}

\emph{Step 3: bounding $b$.}
From \eqref{eq:pf1} and $P\by^*=-c\bq^*$ it follows
$P(\by-\by^*)=M\bu_2-c(\bq-\bq^*)$. Since $\by-\by^*\in\Range(P)$, thus $\|P(\by-\by^*)\|\geq\sigma\|\by-\by^*\|$, $\lambda_{\max}(M)=1$ and
 \eqref{eq:pf3} give
\begin{equation}\label{eq:pf4}
b\ \le\ \frac{1}{\sigma}\bigl\|M\bu_2-c(\bq-\bq^*)\bigr\|
\ \le\ \frac{1}{\sigma}\Bigl((1+\varrho)\|\bu_2\|+c\ell\,a\Bigr).
\end{equation}

\emph{Step 4: bounding $a$.}
Since $\bq\in\partial\bphi(\bz)$, $\bq^*\in\partial\bphi(\bz^*)$ and
$\partial\bphi$ is strongly monotone with modulus $\mu$,
\[
\begin{array}{ll}
c\mu\,a^2\ \le\ c\,\iprod{\bq-\bq^*}{\bz-\bz^*}\\
=\iprod{M\bu_2-P(\by-\by^*)}{\bz-\bz^*}\\
=\iprod{M\bu_2}{\bz-\bz^*}-\iprod{\by-\by^*}{\bu_1},
\end{array}
\]
where we used $P(\bz-\bz^*)=P\bz=\bu_1$. Hence, 
\begin{equation}\label{eq:pf5}
c\mu\,a^2\ \le\ \|\bu_2\|\,a+b\,\|\bu_1\| .
\end{equation}
Substituting \eqref{eq:pf4} into \eqref{eq:pf5} yields $c\mu a^2\le\alpha a+\beta$ with
\[
\alpha:=\|\bu_2\|+\frac{c\ell}{\sigma}\|\bu_1\|,\qquad
\beta:=\frac{1+\varrho}{\sigma}\|\bu_1\|\,\|\bu_2\| .
\]
Solving the quadratic inequality yields
\[
a\ \le\ \frac{\alpha+\sqrt{\alpha^2+4c\mu\beta}}{2c\mu}\ \le\ \frac{\alpha}{c\mu}+\sqrt{\frac{\beta}{c\mu}} .
\]
Because $\|\bu_1\|\le\eta$, $\|\bu_2\|\le\eta$ and
$\|\bu_1\|\|\bu_2\|\le\frac12(\|\bu_1\|^2+\|\bu_2\|^2)=\frac12\eta^2$, we obtain
$\alpha\le(1+\frac{c\ell}{\sigma})\eta$ and $\beta\le\frac{1+\varrho}{2\sigma}\eta^2$, hence
\begin{equation}\label{eq:pf6}
a\ \le\ C\,\eta
\end{equation}
with $C$ as in \eqref{eq:kappa}.

Finally,
inserting \eqref{eq:pf6} into \eqref{eq:pf4} indicates
$b\le\frac{1+\varrho+c\ell C}{\sigma}\eta$, and therefore
\[
\dist(\bx,F^{-1}(0))\ \le\ \|\bx-\bx^*\|=\sqrt{a^2+b^2}\ \le\ \kappa\,\eta.
\]
The proof is completed.
 \hfill $\square$
\end{proof}

We now present the main theorem that shows that the sequence $\dist_A^2(\bx^{k},F^{-1}(0))$ converges to 0 $Q$-linearly.
\begin{theorem}\label{th:rate}
Suppose that Assumptions \ref{assu:monotone}, \ref{assu:connectivity}, \ref{assu:strong} and \ref{assu:calm} hold true. Then, if (\ref{eq:calm}) holds globally, for all $k\geq 0$, 
\[
\dist_A^2(\bx^{k+1},F^{-1}(0))\leq \frac{1}{1+\theta}\dist_A^2(\bx^{k},F^{-1}(0)),
\]
where $\theta:=(2\underline{\lambda}-\underline{\lambda}^2)/\kappa^2$, $\underline{\lambda}=\lambda_{\min}(W)>0$ and $\kappa$ is given by (\ref{eq:kappa}).
If instead (\ref{eq:calm}) holds for $\delta<\infty$, then the same estimate holds for all $k\geq k_0$, where $k_0$ is an index with $\|\bz^{k}-\bz^*\|\leq\delta$ for all $k\geq k_0$ (Such an index $k_0$ exists from Proposition \ref{prop:global}).
\end{theorem}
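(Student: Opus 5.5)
Combine the descent inequality of Lemma \ref{lem:recur} with the error bound of Lemma \ref{lem:metric}, after converting between the Euclidean norm and $\|\cdot\|_A$.

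\emph{Step 1: pick the right reference point.} Since $F^{-1}(0)$ is closed and convex in the $A$-geometry (it equals $\{\bz^*\}\times\{\by: P\by\in -c\,\partial\bphi(\bz^*)\}$, with $\partial\bphi(\bz^*)$ closed convex), let $\bar\bx^{k}\in F^{-1}(0)$ be the $A$-projection of $\bx^k$. Applying Lemma \ref{lem:recur} with $\bx^*=\bar\bx^k$ gives
\[
\dist_A^2(\bx^{k+1},F^{-1}(0))\le\|\bx^{k+1}-\bar\bx^k\|_A^2\le \dist_A^2(\bx^k,F^{-1}(0))-\dist_A^2(0,F(\bx^{k+1})).
\]

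\emph{Step 2: norm comparisons.} The eigenvalues of $W$ lie in $[\underline\lambda,1]$, and $t\mapsto 2t-t^2$ is increasing on $[0,1]$. Hence the eigenvalues of $M=2W-W^2$ lie in $[2\underline\lambda-\underline\lambda^2,1]$. Since $2\underline\lambda-\underline\lambda^2\le1$, for every $\bx$ we get
\[
(2\underline\lambda-\underline\lambda^2)\|\bx\|^2\le\|\bx\|_A^2\le\|\bx\|^2.
\]
It follows that
\[
\dist_A^2(0,F(\bx^{k+1}))\ge(2\underline\lambda-\underline\lambda^2)\dist^2(0,F(\bx^{k+1}))
\]
and
\[
\dist_A^2(\bx^{k+1},F^{-1}(0))\le\dist^2(\bx^{k+1},F^{-1}(0)).
\]

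\emph{Step 3: apply metric subregularity.} By (\ref{eq:z_upd_gradform_vec3}), $\partial\bphi(\bz^{k+1})\neq\emptyset$. In the global case Lemma \ref{lem:metric} then gives
\[
\dist^2(\bx^{k+1},F^{-1}(0))\le\kappa^2\dist^2(0,F(\bx^{k+1})).
\]
Chaining this with Step 2 yields
\[
\dist_A^2(0,F(\bx^{k+1}))\ge\frac{2\underline\lambda-\underline\lambda^2}{\kappa^2}\,\dist_A^2(\bx^{k+1},F^{-1}(0))=\theta\,\dist_A^2(\bx^{k+1},F^{-1}(0)).
\]
Inserting this into Step 1 and rearranging gives $(1+\theta)\dist_A^2(\bx^{k+1},F^{-1}(0))\le\dist_A^2(\bx^k,F^{-1}(0))$, which is the claim.

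\emph{Step 4: local case.} When $\delta<\infty$, Proposition \ref{prop:global} gives $\bz^k\to\bz^*$, since $z^*$ is unique. So there is a $k_0$ with $\|\bz^{k}-\bz^*\|\le\delta$ for all $k\ge k_0$. For $k\ge k_0$ the iterate $\bx^{k+1}$ satisfies the hypothesis of Lemma \ref{lem:metric}, and Steps 1–3 apply verbatim.

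The main point needing care is Step 2: the weighted and unweighted distances must be compared in the correct directions. The lower bound on $\|\cdot\|_A$ is applied to the residual $\dist(0,F)$ and the upper bound to the distance to the solution set. This is exactly where the factor $2\underline\lambda-\underline\lambda^2$ in $\theta$ comes from. A secondary point is the existence of the $A$-projection $\bar\bx^k$ in Step 1. If closedness of $F^{-1}(0)$ is in doubt, take an $\epsilon$-approximate minimizer in place of $\bar\bx^k$ and let $\epsilon\to0$.
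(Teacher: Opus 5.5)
Your proposal is correct and follows essentially the same route as the paper: Lemma \ref{lem:recur} (you apply it at the $A$-projection of $\bx^k$, while the paper takes the infimum over all $\bx^*\in F^{-1}(0)$, which is equivalent), then the two norm comparisons via the spectrum of $2W-W^2$, then Lemma \ref{lem:metric} at $\bx^{k+1}$. Your Step 4 also writes out the local case $\delta<\infty$, which the paper's proof leaves out.
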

\begin{proof}
    We only consider the case $\delta=+\infty$.
From Lemma \ref{lem:recur}, we have that 
\[
\|\bx^k-\bx^*\|_A^2\geq \dist_A^2(\bx^{k+1},F^{-1}(0))+\dist_A^2(0,F(\bx^{k+1})).
\]
Noticing that this inequality holds for all $\bx^*\in F^{-1}(0)$, we can conclude that   
\be\label{eq:aux6}
\dist_A^2(\bx^{k},F^{-1}(0))\geq \dist_A^2(\bx^{k+1},F^{-1}(0))+\dist_A^2(0,F(\bx^{k+1})).
\ee

From Lemma \ref{lem:metric}, one has
\be\label{eq:aux7}
\dist^2(\bx^{k+1},F^{-1}(0))\leq \kappa^2\dist^2(0,F(\bx^{k+1})).
\ee
To proceed, it is necessary to firstly establish the relation between $\dist_A^2(\cdot,\cdot)$ and $\dist^2(\cdot,\cdot)$. Suppose that $\bu=(\bq,\bv)\in F(\bx^{k+1})$ such that $\dist_A^2(0,F(\bx^{k+1}))=\|\bu\|_A^2$,  then
\[
\begin{array}{ll}
\dist_A^2(0,F(\bx^{k+1}))&=\|\bq\|^2+\|\bv\|_{(2W-W^2)}^2\\[5pt]
&\geq (2\underline{\lambda}-\underline{\lambda}^2)(\|\bq\|^2+\|\bv\|^2)\\[5pt]
&\geq (2\underline{\lambda}-\underline{\lambda}^2)\dist^2(0,F(\bx^{k+1})),
\end{array}
\]
where the first inequality follows from $\lambda_{\min}(2W-W^2)=2\underline{\lambda}-\underline{\lambda}^2$ and $2\underline{\lambda}-\underline{\lambda}^2\leq 1$. Similarly, noticing that $\lambda_{\max}(2W-W^2)=1$, we obtain
\[
\dist^2(\bx^{k+1},F^{-1}(0))\geq\dist_A^2(\bx^{k+1},F^{-1}(0)).
\]
Therefore, it follows from (\ref{eq:aux7}) that
\[
\dist_A^2(0,F(\bx^{k+1}))\geq \theta\dist_A^2(\bx^{k+1},F^{-1}(0)),
\]
where $\theta:=(2\underline{\lambda}-\underline{\lambda}^2)/\kappa^2$. This inequality, together with (\ref{eq:aux6}), implies that
\[
\dist_A^2(\bx^{k},F^{-1}(0))\geq (1+\theta)\dist_A^2(\bx^{k+1},F^{-1}(0)).
\]
The proof is completed. \hfill $\square$
\end{proof}

From Theorem \ref{th:rate}, we can also easily derive that each sequence $(z_i^k)$ of agent $i$ converges to the unique solution $z^*$ $R$-linearly.
\begin{corollary}\label{coro:rate}
Under the conditions of Theorem \ref{th:rate} with $\delta=+\infty$, for all $k\geq 0$, 
\[
\|\bz^k-\bz^*\|\leq \frac{1}{\sqrt{(2\underline{\lambda}-\underline{\lambda}^2)}(1+\theta)^{k/2}}\dist_A(0,F^{-1}(0)).
\]
\end{corollary}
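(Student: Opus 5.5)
The plan is to chain Theorem \ref{th:rate} (global case $\delta=+\infty$) from $k=0$ and then convert the resulting bound on $\dist_A$ into a bound on $\|\bz^k-\bz^*\|$ using the norm-equivalence constants already used in the proof of that theorem.

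\emph{Step 1 (iterate the contraction).} Applying the $Q$-linear estimate of Theorem \ref{th:rate} $k$ times gives
\[
\dist_A^2(\bx^{k},F^{-1}(0))\le (1+\theta)^{-k}\dist_A^2(\bx^{0},F^{-1}(0)).
\]
Since $\bz^0=0$ and $\by^0=(I-W)\bz^0=0$ by the initialization of Algorithm \ref{alg:PT}, we have $\bx^0=0$. The right-hand side is therefore $(1+\theta)^{-k}\dist_A^2(0,F^{-1}(0))$, which is exactly the quantity appearing in the statement.

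\emph{Step 2 (pass from $\dist_A$ to $\|\bz^k-\bz^*\|$).} Strong convexity makes $z^*$ unique, and by Lemma \ref{lem:F} every $\bx'=(\by',\bz')\in F^{-1}(0)$ has $\bz'=\mathbb{1}_N\otimes z^*=\bz^*$. Hence the $\bz$-component of every point of $F^{-1}(0)$ is the fixed vector $\bz^*$. Next I check that $F^{-1}(0)$ is closed and nonempty, so that the minimum defining $\dist_A$ is attained:
\begin{itemize}
\item it equals $\{\by:(I-W)\by\in -c\,\partial\bphi(\bz^*)\}\times\{\bz^*\}$, and $\partial\bphi(\bz^*)$ is closed;
\item it is nonempty because $\cQ^*\neq\emptyset$, via the construction in Step 1 of Lemma \ref{lem:metric}.
\end{itemize}
Taking a minimizer $\bx'$, we get
\[
\dist_A^2(\bx^k,F^{-1}(0))=\|\by^k-\by'\|^2+\|\bz^k-\bz^*\|^2_{(2W-W^2)}\ge (2\underline{\lambda}-\underline{\lambda}^2)\|\bz^k-\bz^*\|^2,
\]
using $\lambda_{\min}(2W-W^2)=2\underline{\lambda}-\underline{\lambda}^2$, as in the proof of Theorem \ref{th:rate}.

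\emph{Step 3 (combine).} Putting Steps 1 and 2 together and taking square roots yields the claimed bound on $\|\bz^k-\bz^*\|$. Since $\|z_i^k-z^*\|\le\|\bz^k-\bz^*\|$ for each agent $i$, this gives the $R$-linear rate for every agent.

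I expect no step to be a genuine obstacle. The one point needing care is the justification that all elements of $F^{-1}(0)$ share the same $\bz^*$. This is what lets the $\bz$-part of $\dist_A$ be bounded below by a fixed quantity, and it relies on uniqueness of $z^*$ under Assumption \ref{assu:strong} together with the characterization in Lemma \ref{lem:F}. The case $k=0$ is trivially included in the chain.
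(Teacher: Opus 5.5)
Your proposal is correct and follows essentially the same route as the paper. You iterate the $Q$-linear estimate of Theorem~\ref{th:rate} from $\bx^0=0$, use uniqueness of $z^*$ together with Lemma~\ref{lem:F} to see that every point of $F^{-1}(0)$ has $\bz$-component $\bz^*$, and then bound $\dist_A^2(\bx^k,F^{-1}(0))$ from below by $(2\underline{\lambda}-\underline{\lambda}^2)\|\bz^k-\bz^*\|^2$. Your extra check that $F^{-1}(0)$ is closed and nonempty is something the paper leaves implicit; it is harmless but not needed, since the lower bound holds for every element of $F^{-1}(0)$ and hence also for the infimum.
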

\begin{proof}
From Theorem \ref{th:rate}, we have
\[
\dist_A^2(\bx^{k},F^{-1}(0))\leq\frac{1}{(1+\theta)^k}\dist_A^2(\bx^{0},F^{-1}(0)).
\]
Recall that $\bz^0=0$ and $\by^0=0$, and hence $\bx^0=0$. Since the solution $z^*$ of problem \eqref{eq:IPMO} is unique, from Lemma \ref{lem:F} we have that the coordinate projection of $F^{-1}(0)$ onto $\bz$ is a singleton. Therefore,
\[
\dist_A^2(\bx^{k},F^{-1}(0))\geq \|\bz^k-\bz^*\|_{2W-W^2}^2\geq (2\underline{\lambda}-\underline{\lambda}^2)\|\bz^k-\bz^*\|^2.
\]
The proof is completed.\hfill $\square$
\end{proof}

We conclude this section with a series of remarks.
\begin{remark}\label{rem:calm}
Let us introduce a few concrete examples such that Assumption \ref{assu:calm} is satisfied.
\begin{enumerate}
\item[(i)] \emph{Lipschitz smooth plus piecewise polyhedral case.}
Suppose that $f_i=s_i+r_i$ in Problem (\ref{eq:IPMO}), where $s_i$ is continuously differentiable and $\nabla s_i$ is locally Lipschitz around
$z^*$, and the subdifferential sum rule
\[
\partial\phi_i(z)=\nabla s_i(z)+R_i(z),
\qquad R_i:=\partial r_i+N_{Z_i},
\]
holds locally. Assume in addition that each $R_i$ is piecewise polyhedral and
that each $R_i(z^*)$ is polyhedral. Both conditions hold, under the usual
sum-rule qualification, when $\gph(\partial r_i)$ is piecewise polyhedral and
$Z_i$ is a convex polyhedron: piecewise polyhedrality is preserved under sums,
and a proper lower semicontinuous convex function whose subdifferential is
piecewise polyhedral is piecewise linear--quadratic
\cite[Proposition~12.30]{RW1998}, so that
$\partial r_i(z^*)$ is a polyhedral convex set, while $N_{Z_i}(z^*)$ is a
polyhedral convex cone.
Robinson's theorem \cite{Robinson1981} implies that $R_i$ is
upper Lipschitz at $z^*$. Consequently $\partial\phi_i=\nabla s_i+R_i$, and
hence $\partial\bphi$, is upper Lipschitz at the solution. Thus there are
$\ell_0\ge0$ and $\delta>0$ such that
\[
\dist\bigl(\bq,\partial\bphi(\bz^*)\bigr)
\le \ell_0\|\bz-\bz^*\|
\]
for every $\bq\in\partial\bphi(\bz)$
and
$
\|\bz-\bz^*\|\le\delta.
$
Moreover $\partial\phi_i(z^*)=\nabla s_i(z^*)+R_i(z^*)$ is polyhedral, hence so
is the product $\partial\bphi(\bz^*)$; together with the subspace $\Range(P)$
it forms a pair of polyhedral convex sets whose intersection $\cQ^*$ is
nonempty. Hoffman's error bound \cite{Hoffman} therefore gives
$\varrho_0\ge0$ such that
\[
\dist(\bv,\cQ^*)
\le\varrho_0\bigl(
\dist(\bv,\partial\bphi(\bz^*))+\dist(\bv,\Range(P))\bigr)
\]
for all $\bv$.
Applying this estimate at $\bv=\bq$ yields \eqref{eq:calm} with
$\ell=\varrho_0\ell_0$ and $\varrho=\varrho_0$. 
\item[(ii)] \emph{Smooth case.} If $Z_i=\bR^n$ and each $\nabla f_i$ is
Lipschitz with constant $\ell_i$, then $\partial\bphi=\nabla\bphi$ is
single-valued and $\cQ^*=\{\nabla\bphi(\bz^*)\}$, so \eqref{eq:calm} holds
globally with $\ell=\max_i\ell_i$ and $\varrho=0$.
\end{enumerate}
\end{remark}

\begin{remark}\label{rem:assumptions}
 One of the original advantages of Proximal-Tracking is that it does not require differentiability, smoothness or Lipschitz continuity of the local cost function. Let us point out that the analysis in this section does not conflict with this advantage. Assumption \ref{assu:calm} keeps the nonsmooth setting genuinely nonsmooth. It covers piecewise polyhedral regularizers such as $\ell_1$, elastic net, anisotropic TV and polyhedral indicator functions without requiring smoothness of the full objective.
\end{remark}

\begin{remark}\label{rem:sublinear}
Let us use a toy example to show that, besides strong convexity assumption, additional stability assumptions such as Assumption \ref{assu:calm} are necessary to derive the linear convergence of Proximal-Tracking. Consider a two-agent network that satisfies Assumption \ref{assu:connectivity},
\[
N=2,\ n=1,\ \mathcal{W}=\begin{bmatrix}1-w & w\\ w& 1-w\end{bmatrix},\ w\in(0,1/2).
\]
Let $Z_1=Z_2=\bR$ and define
\[
g(z):=\frac{\mu}{2}z^2+\frac{2}{3}|z|^{3/2},
\]
where $\mu>0$ is a constant. The two local cost functions are
\[
f_1(z):=g(z)+dz,\quad f_2(z):=g(z)-dz,
\]
where $d>0$ is a constant. Each $f_i$ is finite, closed and $\mu$-strongly convex, and thus Assumptions \ref{assu:monotone} and \ref{assu:strong} hold. The unique solution is $z^*=0$. By direct calculations, we have
\[
\|\bz^k-\bz^*\|\sim\frac{\sqrt{2}c^2}{(2w)^4}\frac{1}{k^2},
\]
which implies that Algorithm \ref{alg:PT} converges with a sublinear rate.
\end{remark}
%------------------------------------------------------------------------
%------------------------------------------------------------------------
%------------------------------------------------------------------------
\section{Augmented Lagrangian Tracking} \label{sec:alt}
Based on Proximal-Tracking, an algorithm named as Augmented Lagrangian Tracking (ALT) is proposed in \cite{FP2023} for solving distributed optimization with equality and inequality coupling constraints. Under mild conditions, the global convergence of ALT is established in \cite{FP2023}. However, the convergence rate of ALT has not been studied yet. In this section, we show that the linear convergence rate of ALT could be established by following a similar analysis as in Section \ref{sec:rate}.

Consider the following constraint-coupled optimization problem,
\be\label{eq:CP}
    \begin{aligned}
        \min_{x_i} &\; \sum_{i=1}^{N} f_i(x_i) ,\\
        \text{s.t. }& \sum_{i=1}^N(A_ix_i-b_i)=0,\ \sum_{i=1}^Nh_i(x_i)\leq 0,\\
        &x_i\in X_i,\ i=1,\ldots,N,
    \end{aligned}
\ee
where $f_i:\bR^{n_i}\rightarrow\bR$ is the local cost function of agent $i$, $A_i\in\bR^{p\times n_i}$, $h_i:\bR^{n_i}\rightarrow\bR^q$ and $X_i\subseteq\bR^{n_i}$. Let us point out that it is not required that the local set $X_i$ be compact, nor that $f_i$ and $h_i$ be smooth.

Let $\bx=[ x_1^\top \cdots x_N^\top]^\top$ with $x_i \in\bR^{n_i}$.
The Lagrangian function of Problem (\ref{eq:CP}) is given by
\[
L(\bx,\lambda,\mu):=\sum_{i=1}^N[f_i(x_i)+\lambda^T(A_ix_i-b_i)+\mu^Th_i(x_i)].
\]
The dual problem of Problem (\ref{eq:CP}) is in the form of
\be\label{eq:DCP}
\max\limits_{\lambda\in\bR^p,\mu\in\bR^q_+}\sum_{i=1}^N\varphi_i(\lambda,\mu),
\ee
where
\be\label{eq:dual}
\varphi_i(\lambda,\mu)=\inf_{x_i\in X_i}f_i(x_i)+\lambda^T(A_ix_i-b_i)+\mu^Th_i(x_i).
\ee
Under standard convexity and strong duality assumptions, it is known that solving Problem (\ref{eq:CP}) is equivalent to solving its dual problem (\ref{eq:DCP}). Observing that Problem (\ref{eq:DCP}) is in the same formulation of Problem (\ref{eq:IPMO_eqv}), we have that Proximal-Tracking can be easily adapted to solve Problem (\ref{eq:DCP}). Accordingly, as shown in \cite{FP2023}, the steps of ALT for agent $i$ are as follows,
\[
\begin{array}{ll}
\displaystyle\begin{bmatrix} l_i^k\\ m_i^k\end{bmatrix}=\sum_{j\in\cN_i}w_{ij}\begin{bmatrix} \lambda_j^k\\ \mu_j^k\end{bmatrix},\ \begin{bmatrix} \delta_i^k\\ \gamma_i^k\end{bmatrix}=\sum_{j\in\cN_i}w_{ij}\begin{bmatrix} d_j^k\\ g_j^k\end{bmatrix},\\[10pt]
\begin{bmatrix} \lambda_i^{k+1}\\ \mu_i^{k+1}\end{bmatrix}=\argmin\limits_{\lambda\in\bR^p,\mu\in\bR^q_+}-\varphi_i(\lambda,\mu)+\begin{bmatrix} \delta_i^k-u_i^k\\ \gamma_i^k-v_i^k\end{bmatrix}^T\begin{bmatrix} \lambda\\ \mu\end{bmatrix}\\[10pt]\quad\quad\quad\quad\quad\quad\quad\quad\quad\quad+\frac{1}{2c}\left\|\begin{bmatrix} \lambda-l_i^k\\ \mu-m_i^k\end{bmatrix}\right\|^2,\\[10pt]
\begin{bmatrix} u_i^{k+1}\\ v_i^{k+1}\end{bmatrix}=-\frac{1}{c}\begin{bmatrix} \lambda_i^{k+1}-l_i^k\\ \mu_i^{k+1}-m_i^k\end{bmatrix}-\begin{bmatrix} \delta_i^k-u_i^k\\ \gamma_i^k-v_i^k\end{bmatrix},\\[10pt]
\begin{bmatrix} d_i^{k+1}\\ g_i^{k+1}\end{bmatrix}=\begin{bmatrix} \delta_i^k\\ \gamma_i^k\end{bmatrix}+\begin{bmatrix} u_i^{k+1}\\ v_i^{k+1}\end{bmatrix}-\begin{bmatrix} u_i^{k}\\ v_i^{k}\end{bmatrix},
\end{array}
\]
where $(\lambda_i^k,\mu_i^k)$ are local estimates of the optimal solution to Problem (\ref{eq:DCP}). As stated previously, the global convergence has been well established in \cite{FP2023}. 

Since ALT is actually Proximal-Tracking adapted to Problem (\ref{eq:DCP}), naturally, the linear convergence rate of ALT can be derived very similarly  from the convergence rate analysis of Proximal-Tracking as in Section \ref{sec:rate}. However, as demonstrated previously, a key assumption to derive the linear rate of ALT is Assumption \ref{assu:calm}. In \cite{GZ2025}, an example is provided to ensure that each $\partial(-\varphi_i)$ is metric regular: each local objective function $f_i(x)=s_i(x)+r_i(x)$, where $s_i$ is smooth and strongly convex, $\partial r_i$ is polyhedral, i.e., $\gph(\partial r_i)$ is a union of finite polyhedral convex sets, $h_i$ is affine and $X_i$ is a convex polyhedron.
By following the discussion of item (i) in Remark \ref{rem:calm}, we can conclude that this example does satisfy Assumption \ref{assu:calm}.
Therefore, in this setting, the linear convergence rate of ALT can be established. We omit the detailed proof here for saving space.
%------------------------------------------------------------------------
%------------------------------------------------------------------------
%------------------------------------------------------------------------

%% The Appendices part is started with the command \appendix;
%% appendix sections are then done as normal sections
%% \appendix

% To print the credit authorship contribution details
\printcredits

\section*{Data availability}
No data was used for the research described in the article.

\section*{Acknowledgement}
This work is supported by the National Natural Science Foundation of China (12271076).
%% Loading bibliography style file
%\bibliographystyle{model1-num-names}
\bibliographystyle{cas-model2-names}

% Loading bibliography database
\bibliography{ref}

% Biography
%\bio{}
% Here goes the biography details.
%\endbio

%\bio{pic1}
% Here goes the biography details.
%\endbio

\end{document}